\pgfplotsset{compat=newest}
\newtheorem{thm}{Theorem}
\newtheorem*{rmk}{Remark}
\DeclareMathOperator*{\argmin}{argmin}
\title{An Optimal Engagement Zone Avoidance Scenario in 2-D
}
\author{
	Isaac E. Weintraub\footnote{Electronics Engineer, Controls Science Center, Air Force Research Laboratory, WPAFB, OH 45433 AIAA Senior Member.},
	Alexander Von Moll\footnote{Electronics Engineer, Controls Science Center of Excellence, Air Force Research Laboratory, WPAFB, OH 45433},
	Christian Carrizales\footnote{Aerospace Engineer, Aerospace Systems Directorate, Air Force Research Laboratory, WPAFB, OH 45433},
	Nicholas Hanlon\footnote{Aerospace Engineer, Aerospace Systems Directorate, Air Force Research Laboratory, WPAFB, OH 45433}, and
	Zachariah Fuchs\footnote{Assistant Professor, Electrical Engineering, University of Cincinnati, Cincinnati, OH 45221}
}
\begin{document}

\maketitle

\begin{abstract}
In this paper, an optimal control problem is considered where a target vehicle aims to reach a desired location in minimum time while avoiding a dynamic engagement zone. Using simple motion, four potential approaches are considered. First, the min-time strategy which ignores the engagement zone is posed and solved. Second, the min-time strategy which avoids the engagement zone entirely is considered. Third, the min-time strategy which allows for some time in the engagement zone; but, still strives to stay away from the center of the engagement zone is posed. Lastly, a fixed final-time strategy is considered, wherein the target tries to avoid the engagement zone; but, is required to arrive at the desired location at a specific time. Using a nonlinear program solver, the optimal strategies are numerically solved. From the results of the numeric solutions, the optimal strategies are discussed and comparisons are drawn.

\end{abstract}

\section{Nomenclature}

{\renewcommand\arraystretch{1.0}
\noindent\begin{longtable*}{@{}l @{\quad=\quad} l@{}}
$c(\cdot)$ & path constraint function\\
$\mathscr{C}$ & termination set \\
$d$ & vehicle range [DU] \\
$F$ & final location \\
$\mathbf{f}$ & dynamics / equations of motion\\
$g_{EZ}$& penalty function for the engagement zone\\
$J_i$ & objective cost functional for scenario $i$\\
$k_{EZ}$ & objective cost weight\\
$l(\cdot)$ & running cost \\
$v_T$ & target velocity [DU/TU]\\
$\mathscr{H}$ & Hamiltonian\\
$\mathbf{p}$ & costate vector\\
$p_i$ & costate corresponding to a state $i$ \\
$R_{\mathrm{min}}$ & weapon engagement zone minimum range [DU] \\
$R_{\mathrm{max}}$ & weapon engagement zone maximum range [DU] \\
$T$ & target location\\
$t$ & time [TU] \\
$t_f$ & final time [TU] \\
$t_\mathrm{go}$ & required final time [TU] \\
$u$ & control [rad] \\
$W$ & engagement zone origin \\
$\mathcal{W}$ & engagement zone region\\
$\mathbf{x}$  & state of the scenario \\
$\mathbf{x}_0$  & initial state of the scenario \\
$\mathbf{x}_f$  & final state of the scenario \\
$\hat{x}$ & Cartesian fixed frame x-direction\\
$x_T$ & target vehicle's x-location [DU]\\
$\hat{y}$ & Cartesian fixed frame y-direction\\
$y_T$& target vehicle's y-location [DU]\\
$\lambda$ & line of sight angle [rad] \\
$\mu$ & Lagrange variable\\
$\psi_T$ & target vehicle heading\\
$\rho$ & weapon engagement zone range [DU] \\
$\rho_\mathrm{max}$ & maximum weapon engagement zone range [DU]\\
$\sigma_i$ & variable used for simplifying equations\\
$\tau$ & half-angle variable\\
$\theta$ & engagement zone angle [rad]\\
$\xi$ & aspect angle [rad] \\
\end{longtable*}}

\section{Introduction}

\lettrine{R}{eal} world applications for trajectory optimization typically entail constraints which are functions of the state variables, control variables, or both.
Consider, for example, such applications as Urban Air Mobility (UAM) and Unmanned Traffic Management (UTM) \cite{ramee2021development,prevot2016uas, faa2018unmanned} in which individual vehicles are constrained in the amount of control effort they can apply; but more interestingly, they are constrained in what physical space they can occupy at what times.
The latter challenge has been approached in a variety of ways, e.g., via collision avoidance algorithms~\cite{manyam2021quadratic}, interactive trajectory planning~\cite{davis2012development, casbeer2006cooperative, rasmussen2017practical}, or scheduling algorithms~\cite{sama2018coordination}.
In this work, we abstract the notion of what space \textit{cannot} be occupied by a particular vehicle at a particular time as a dynamic keepout zone or Engagement Zone (EZ).
We treat the EZ in a variety of ways: 1) as a hard constraint -- meaning the vehicle must remain outside it at all times, 2) as a soft constraint, and 3) as the objective cost functional for a fixed final time scenario.
Then, the trajectory optimization for a single vehicle, the target vehicle, is addressed by means of optimal control.

Optimal control and flying vehicles have long gone hand-in-hand, e.g., all the way back to the famous Goddard rocket \cite{maurer1976numerical,goddard1919method}.
Although many of the theoretical aspects have been addressed (c.f.~\cite{bryson1975applied}), constraints pose some practical difficulties in applying the so-called indirect methods of optimal control, which are based on the first order necessary conditions for optimality.
In particular, it is often necessary to concatenate a series of arcs (or sub-trajectories) wherein each arc is assumed to be unconstrained ($U$) or constrained ($C$).
The numerical solution procedure is referred to as multiple shooting.
Additional complexities arise when there are multiple constraints and the sequence of constraint activation and/or deactivation is unknown.
Alternatively, direct optimization methods such as collocation, or pseudo-spectral methods (PSM)~\cite{fahroo2005pseudospectral, darby2009state}, can circumvent some of the practical, implementation issues~\cite{kelly2017introduction, weintraub2020direct}.
As will be shown, PSM readily handles the different interpretations of the EZ, mentioned above.
Nonetheless, we pursue both approaches here to develop an understanding of the essential features of the solution.

Our notion of the EZ extends also to the realm of adversarial scenarios.
In this setting, the EZ could represent undesired collision, observation or exposure to an enemy~\cite{weintraub2020maximum,weintraub2021engagement}, or, more generally, states from which the target vehicle can be guaranteed to lose in a differential game sense.
The latter is referred to as the \textit{Game of Kind}, and the boundary of the EZ thus may represent the associated barrier surface~\cite{isaacs1965differential}.
Unlike in the traditional differential game setting, any notion of adversarial behavior has been abstracted into the EZ, and the target vehicle seeks to accomplish some external goal.
In this case, the target vehicle must maneuver from a pre-specified initial state to a goal state.
A similar scenario was investigated in~\cite{fuchs2017generalized} under the name of optimal constrained retreat (OCR).
There, a vehicle sought to maneuver past a static defender to a pre-specified retreat zone in minimum time subject to the constraint that it never became advantageous (optimal) to engage the defender in lieu of retreat.
OCR was also treated in~\cite{vonmoll2020optimal} for the case where the defender was a turn-rate-constrained turret.
A significant difference in the present work is the EZ considered here is not only dynamic (in the sense that it is time-varying), but also a function of the target vehicle's control input.
This introduces a coupling between the constraint and the vehicle's control which has not yet been considered.
In addition to providing solutions to the trajectory optimization problems under each of the EZ interpretations, an important contribution of this work is the closed-form analytic expression for the optimal control when the (hard) EZ constraint is active.

The remainder of the paper is organized as follows.
Section~\ref{sec:optimal-control-problem} specifies four optimal control problems: A) (unconstrained) minimum time, B) minimum time with EZ as a hard constraint, C) minimum time with EZ as a soft constraint, and D) minimum EZ violation subject to a desired arrival time.
Section~\ref{sec:optimal-target-vehicle-strategy} presents the analysis of the optimal control for each of the scenarios.
Section~\ref{sec:results} contains the solution trajectories for a particular initial and goal state.
Finally, Section~\ref{sec:conclusions} concludes the paper by summarizing the main results and identifying areas for further research.

\section{Optimal Control Problem}
\label{sec:optimal-control-problem}
Consider the optimal engagement zone avoidance scenario wherein a target vehicle $(T)$ aims to reach a desired location ($F$) in minimum time; while, at the same time, strives to avoid the engagement zone entirely where possible, and stay farthest from the engagement zone's origin, otherwise. The target vehicles state ($\mathbf{x}$) is defined by its pair of Cartesian coordinates, $(x_T,y_T)^\top \in \mathbb{R}^2$. The holonomic target vehicle controls its heading $(\psi_T)$ directly; the admissible control is $u(t) = \psi_T \in (-\pi,\pi]$. The target is assumed to have constant strictly-positive speed, $v_T>0$. With all these definitions, the complete state of the system is as follows:
\begin{equation}
	\mathbf{x} = [x_T, y_T]^\top \in \mathbb{R}^2.
\end{equation}
The state dynamics are comprised of the target's motion and are governed by the following ordinary nonlinear differential equations:
\begin{equation}
	\begin{aligned}
		\dot{x}_T(t) &= v_T \cos(\psi_T(t)),\\
		\dot{y}_T(t) &= v_T \sin(\psi_T(t)).
	\end{aligned}
	\label{eq:EOM}
\end{equation}
The initial location of the target is $\mathbf{x}_0 = \mathbf{x}(t_0) = [x_{T0},y_{T0}]^\top$. The desired final location of the target, its goal position, is $\mathbf{x}_f = \mathbf{x}(t_f) = [x_{Tf},y_{Tf}]^\top$. 

The engagement zone (EZ) model is dynamic in nature and is a function of the line of sight angle from the engagement zone's origin to the target, $\lambda \in [-\pi,\pi)$, the relative bearing from the target to the engagement zone's origin (EZO), $\xi \in [0,2\pi)$ and $\theta \in [0,2\pi)$, the angle from the EZO to any arbitrary location in the state space, In polar form, the general EZ is as follows:
\begin{equation}
	\rho(\theta;\lambda,\xi) = \left(\left( \dfrac{\cos \xi + 1}{2} \right)\left(R_{\mathrm{max}} -R_{\mathrm{min}}\right) + R_{\mathrm{min}} \right)\dfrac{1}{2}\left( 1 + \sin \left(\dfrac{\pi}{2} - \lambda + \theta\right)\right).
	\label{eq:generalEZ}
\end{equation} 
As an assumption, the minimum range of the EZ is selected to be $R_\mathrm{min} \equiv 0$, and therefore the general EZ model in \cref{eq:generalEZ}, as used throughout this paper is,
\begin{equation}
	\rho(\theta;\lambda,\xi) = \dfrac{R_\mathrm{max}}{2} \left( \dfrac{\cos \xi + 1}{2} \right)  \left( 1 + \sin \left(\dfrac{\pi}{2} - \lambda + \theta\right)\right).
	\label{eq:EZModel}
\end{equation}
The EZ model is defined in \cref{eq:EZModel} so that when the EZ angle, $\theta$, is equal to the line of sight angle $\lambda$
In general, the cardiod-shaped EZ could have any orientation and could be dynamic (i.e., $\theta \equiv \theta(t)$) or even explicitly controlled in the adversarial case.
By setting $\theta(t) = \lambda(t)~\forall t$, we are actually solving for the worst-case for the target vehicle.
That is, a trajectory which avoids the EZ under $\theta = \lambda$ is guaranteed to avoid the EZ in the general $\theta$ case.
The range of the EZ thus corresponds to the maximum of~\eqref{eq:EZModel},
\begin{equation}
	\rho_{\mathrm{max}}(\xi) = \frac{R_{\mathrm{max}}}{2} \left(\cos \xi + 1\right),
	\label{eq:EZ}
\end{equation}
where $R_{\mathrm{max}} > 0$ is the maximum possible range of the engagement zone and $\rho_\mathrm{max}$ is the corresponding maximum engagement zone range from its origin ($W$) as a function of the bearing angle, $\xi$. Taking the absolute value of the bearing angle, $\xi$, the aspect angle of the target vehicle is $|\xi|$. The aspect angle and line of sight angles are given as a function of the state and control are as follows:
\begin{equation}
	\xi(\mathbf{x}(t),\psi_T(t)) = \psi_T(t) - \lambda(\mathbf{x}(t)) - \pi,
	\label{eq:aspectAngle}
\end{equation}
\begin{equation}
	\lambda(\mathbf{x}(t)) = \tan^{-1}(y_T/x_T).
	\label{eq:LOS}
\end{equation}
The distance, $d$, is defined as the instantaneous range of the target vehicle from the engagement zone's origin ($W$):
\begin{equation}
	d(t) = \sqrt{x_T^2(t)+y_T^2(t)}
	\label{eq:distance}
\end{equation}
Final time is defined as the instant when the target, $T$, is collocated at the desired final location,
\begin{equation}
	\mathscr{C} = \lbrace \mathbf{x}(t) | (x_T(t) - x_{Tf})^2 + (y_T(t) - y_{Tf})^2 = 0 ,\; t = t_f \rbrace
	\label{eq:terminalManifold}
\end{equation}

In this paper, four optimal control problems are investigated: To determine the target's strategy which A) reaches the final location in minimum time regardless of the engagement zone, B) avoids the engagement zone completely and reaches the desired final location in minimum time, C) reaches final location in minimum time and distances itself from the engagement zone's origin, D) arrives at the final location at a specified time while distancing itself from the engagement zone's origin. A figure which depicts the EZ avoidance scenarios is shown in \Cref{fig:geometry}. 
\begin{figure}[H]
    \centering
    \includegraphics[width=2.5in]{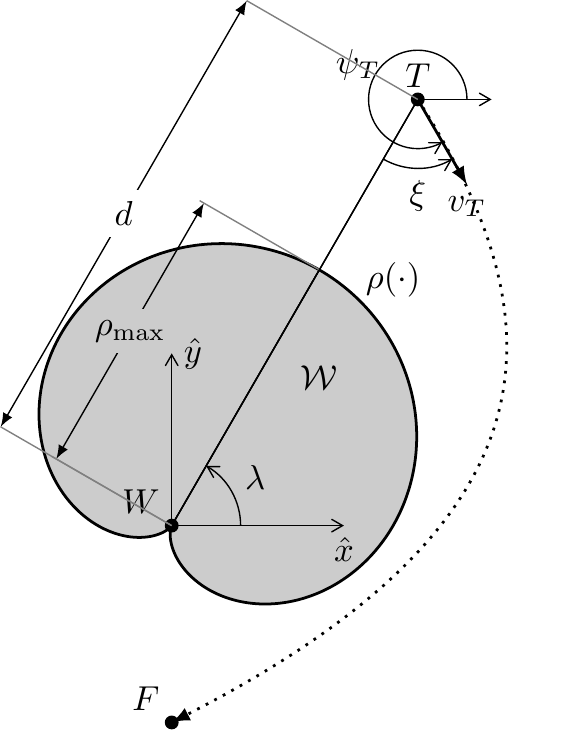}
    \caption{The engagement zone avoidance scenario consists of a maneuverable target vehicle ($T$) which aims at reaching a location ($F$) in minimum time while avoiding the engagement zone ($\mathcal{W}$).}
    \label{fig:geometry}
\end{figure}

\subsection{Min-Time Arrival}
In this scenario, the objective is to arrive at the desired final location in minimum time, regardless of the EZ. This provides a bound on the minimum time. The optimal heading for this scenario is obtained from minimization of the objective function,
\begin{equation}
	u_A^* = \argmin_{\psi_T} J_A = \int_0^{t_f}1 \mathop{\mathrm{d}t} = t_f.
	\label{eq:ObjectiveA}
\end{equation}
For this scenario, the time where the target arrives at the desired final location is denoted $t_{f,A}$.

\subsection{Min-Time Arrival while Avoiding the EZ}
In this scenario, the objective is to arrive at the desired final location in minimum time while avoiding the engagement zone for the entire trajectory. In order to consider this optimal control problem the optimal control is one which satisfies the following:
\begin{equation}
	u^*_B(t) = \argmin_{\psi_T} J_B = \int_0^{t_f} 1 \; \mathrm{d}t = t_f,
	\label{eq:ObjectiveB}
\end{equation}
subject to a path constraint (inequality constraint),
\begin{equation}
	c(\mathbf{x}(t),u(t),t) = \rho_\mathrm{max}(\lambda(x_T,y_T)),\xi(x_T,y_T,\psi_T)) - d(x_T,y_T) \leq 0  \; \forall \; t \in [0,t_f].
	\label{eq:ineqConstr1}
\end{equation}
For this scenario, the time where the target arrives at the desired final location is denoted $t_{f,B}$.

\subsection{Min-Time Arrival while Distancing Itself from the EZO}
In this scenario, the objective is to arrive at the desired final location in minimum time while avoiding the engagement zone's origin. Rather than force the target to completely avoid the EZ, the target is allowed to enter the EZ (while incurring some penalty) to encourage a faster arrival. The penalty function is $g_{EZ}(\mathbf{x}(t),u(t),t)$ and is as follows:
\begin{equation}
	g_{EZ}(\mathbf{x}(t),u(t)) = \begin{cases}
		\dfrac{\rho_\mathrm{max}(\xi(x_T(t),y_T(t),\psi_T(t)))}{d(x_T(t),y_T(t))} - 1 & d \leq \rho \\
		 0 	& d > \rho.
	\end{cases}
	\label{eq:penaltyFunction}
\end{equation}
The penalty function as described in \cref{eq:penaltyFunction} provides a penalty of increasing magnitude as the target gets closer to the EZO (dictated by the first case where $d \leq \rho$). Outside the EZ, no penalty is incurred (dictated by the second case where $d > \rho$). In \Cref{fig:penaltyFunction} the penalty associated with entering the EZ is illustrated. 
\begin{figure}[htb]
	\centering
	\includegraphics[width=5in]{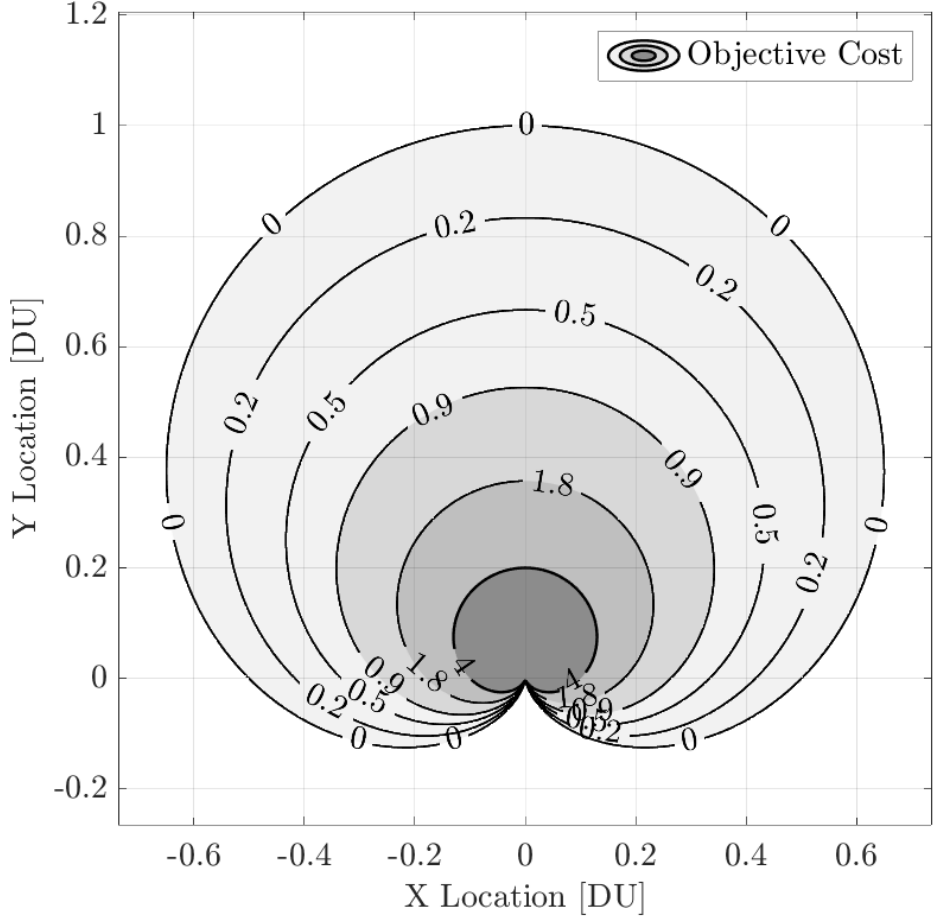}
	\caption{The objective cost that penalizes being closer to the engagement zone's origin.}
	\label{fig:penaltyFunction}
\end{figure}
Contour lines of fixed-value are plotted in the state space. The case where the contour line of value $0$ represents the EZ's border -- no cost is incurred if the target takes a trajectory along the EZ's border.

For this scenario, the objective cost is
\begin{equation}
	u^*_C(t) = \argmin_{\psi_T} J_C = t_f + k_{EZ} \int_0^{t_f} g_{EZ}(\mathbf{x}(t),u(t)) \mathrm{d}t.
	\label{eq:ObjectiveC}
\end{equation}
The objective cost penalizes time inside the EZ. If the target vehicle does not go inside the EZ, the objective cost from \cref{eq:ObjectiveB} is the same as the objective cost in \cref{eq:ObjectiveB}. A weight $k_{EZ}$ is applied to tune the penalty from arrival time and the penalty from taking a trajectory close to the EZO. It should be noted that, theoretically, a gain of $k_{EZ}=0$ is identical to Scenario A. As the gain, $k_{EZ} \rightarrow \infty$, trajectories would remain outside the engagement zone entirely and the outcome is the same as Scenario B.

\subsection{Specified Time of Arrival while Distancing Itself from the EZO}
In this scenario, the objective is to arrive at the desired final location at a specific time. Obviously, the arrival time must be greater than or equal to the minimum time trajectory from Scenario B. Also, this scenario is only investigated if the required arrival time is less than the time from Scenario A (provided the initial and final location of the target are the same). The required arrival time, $t_{\mathrm{go}}$ is selected over the interval,
\begin{equation}
	t_{\mathrm{go}} \in \left[t_{f,A},t_{f,B}\right).
\end{equation}
If the arrival time is strictly greater than the min-time trajectory in Scenario B, the target strives to minimize its range from the WEZ's origin. The objective is to find the control which satisfies \cref{eq:ObjectiveB}, subject to the  boundary condition,
\begin{equation}
	t_f - t_{\mathrm{go}} = 0.
\end{equation}
Therefore, the objective cost for this scenario is
\begin{equation}
	u^*_D(t) = \argmin_{\psi_T} J_D =  \int_0^{t_f} g_{EZ}(\mathbf{x}(t),u(t)) \mathrm{d}t.
	\label{eq:ObjectiveD}
\end{equation}
In \Cref{tab:OCPs}, each of the optimal control problems are summarized.
\begin{table}[H]
\centering
\caption{Optimal Control Problems}
	\begin{tabular}{l c c c c}
		\toprule
		\textbf{Parameter} & \textbf{Scenario A} & \textbf{Scenario B} & \textbf{Scenario C} & \textbf{Scenario D} \\
		\toprule
		Initial State   & $\mathbf{x}_0 = T$ & $\mathbf{x}_0 = T$ & $\mathbf{x}_0 = T$ & $\mathbf{x}_0 = T$ \\
		Terminal State  & $\mathbf{x}_f = F$ & $\mathbf{x}_f = F$ & $\mathbf{x}_f = F$ & $\mathbf{x}_f = F$\\
		Terminal Time   & free & free & free & fixed \\ 
		Objective       & $J_A = t_f $ & $J_B = t_f $ & $J_C = t_f + k_{EZ}\int_0^{t_f} g_{EZ}(\mathbf{x}(t),u(t)) \mathrm{d}t$ & $J_D = \int_0^{t_f} g_{EZ}(\mathbf{x}(t),u(t)) \mathrm{d}t$\\
		Path Constraint & None & $\rho(\cdot) - d(\cdot) \leq 0$ & None & None
	\end{tabular}
	\label{tab:OCPs}
\end{table}

For each of the four scenarios, the initial and final state are fixed and known. In Scenarios A B, and C, the final time is free and in Scenario D, the final time is fixed and known. The objective for Scenario A and B is to minimize the time only; Scenario C mixed (time and range from EZO when the target is inside the EZ), and Scenario D is one which penalizes the range from the EZO when the target is inside the EZ. Scenario B restricts trajectories to those that stay outside the EZ while Scenarios A, C, and D allow the target to enter the EZ; for Scenarios C and D a penalty is applied for occupying the EZ.

\section{Optimal target Vehicle Strategy}
\label{sec:optimal-target-vehicle-strategy}
In this paper, four optimal control problems are compared for the purpose of reaching a desired final location, $F$, in minimum time. \Cref{tab:OCPs} provides a summary of the four considered scenarios. In this section, the four scenarios are posed as optimal control problems.  

\subsection{Scenario A. Min-Time Arrival}
In this scenario the objective to to identify the optimal control whereby a target, $T$, arrives at a desired terminal location, $F$, in minimum time regardless of the engagement zone. For this optimal control problem, the initial state, $\mathbf{x}_0$, and final state, $\mathbf{x}_f$, are provided. Because the objective cost is Mayer, the Hamiltonian is the following:
\begin{equation}
	\mathscr{H}_A = \left< \mathbf{p}(\mathbf{x}(t),u(t),t),\mathbf{f}(\mathbf{x}(t),u(t),t)\right>,
	\label{eq:HamiltonianA1}
\end{equation}
where the subscript $A$ corresponds to the Hamiltonian for Scenario A. The Hamiltonian from \cref{eq:HamiltonianA1} is the following:
\begin{equation}
	\mathscr{H}_A = p_{x_T} v_T \cos \psi_T + p_{y_T}v_T\sin\psi_T.
\end{equation}
To determine the optimal control, the necessary conditions for optimality are formed. First, consider the stationarity condition:
\begin{equation}
	\frac{\partial \mathscr{H}_A}{\partial \psi_T} = 0 = - p_{x_T} \sin \psi_T + p_{y_T} \cos \psi_T.
\end{equation}
Using algebraic manipulation, the optimal control, $\psi_T^*$, is found to be a function of the costates,
\begin{equation}
	\psi_T^* = \cos^{-1} \begin{pmatrix} \frac{p_{x_T}}{\sqrt{p_{x_T}^2 + p_{y_T}^2}} \end{pmatrix} = \sin^{-1} \begin{pmatrix} \frac{p_{y_T}}{\sqrt{p_{x_T}^2 + p_{y_T}^2}} \end{pmatrix}.
	\label{eq:OptimalControlB}
\end{equation}
The costate equations are obtained by taking the partial of the Hamiltonian with respect to the states. Since the states do not appear explicitly in the Hamiltonian, the costates are constant,
\begin{equation}
	\dot{p}_{x_T} = -\frac{\partial \mathscr{H}_B}{\partial x_T} = 0 \quad \dot{p}_{y_T} = -\frac{\partial \mathscr{H}_B}{\partial y_T} = 0.
\end{equation} 
Because the costates are constant and the optimal control is a function of the costates, the optimal control is constant. Therefore, the optimal control,
\begin{equation}
	\psi_T^* = \tan^{-1}\left( \frac{y_f-y_0}{x_f-x_0}\right).
	\label{eq:OptimalControlA}
\end{equation}
The time of arrival is simply the distance over the speed,
\begin{equation}
	t_{f,A} = \frac{\overline{TF}}{v_T} = \frac{\sqrt{(x_f-x_0)^2+(y_f-y_0)^2}}{v_T}.
\end{equation}

\subsection{Scenario B. Min-Time Arrival while Avoiding the EZ}
In this scenario, the target vehicle strives to reach the desired final location $F$ in minimum time while completely avoiding the EZ. For this problem the initial location ($\mathbf{x}_0$) and final location ($\mathbf{x}_f$) of the target are provided; both are located outside the EZ. Because the objective cost \cref{eq:ObjectiveB} is Mayer, the Hamiltonian is does not contain a Lagrange cost. This results in the following Hamiltonian:
\begin{equation}
	\mathscr{H}_B = \left< \mathbf{p}(\mathbf{x}(t),u(t),t), \mathbf{f}(\mathbf{x}(t),u(t),t) \right>+ \mu c(\mathbf{x}(t),u(t),t),
	\label{eq:HamiltonianB1}
\end{equation} 
where, the subscript $B$ denotes this Hamiltonian corresponds to Scenario B. The Lagrange variable $\mu$ is multiplied by the inequality constraint $c(\mathbf{x}(t),u(t),t)$ from \cref{eq:ineqConstr1}. 
When $c < 0$, the path constraint is satisfied and $\mu = 0$; however, when $c =0$, the path constraint is active and $\mu < 0$~\cite{bryson1975applied}.
The Hamiltonian from \cref{eq:HamiltonianB1}, more explicitly, is
\begin{equation}
	\mathscr{H}_B = p_{x_T} v_T \cos \psi_T + p_{y_T}v_T\sin\psi_T + \mu \left( \rho(\lambda(x_T,y_T)),\xi(x_T,y_T,\psi_T)) - d(x_T,y_T) \right).
	\label{eq:HamiltonianB2}
\end{equation}

\begin{thm}
\label{thm:ScenarioB1}
When the target is not located on the boundary of the EZ the optimal trajectory is a straight-line trajectory, $\psi_T(t)$ is constant.
\end{thm}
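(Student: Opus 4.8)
The plan is to argue that, on any open interval where the target is strictly outside the EZ, the inequality constraint $c(\mathbf{x},u,t) < 0$ is inactive, so the Lagrange multiplier vanishes, $\mu = 0$. On such an interval the Hamiltonian in~\eqref{eq:HamiltonianB2} reduces to exactly the Scenario~A Hamiltonian, $\mathscr{H}_B = p_{x_T} v_T \cos\psi_T + p_{y_T} v_T \sin\psi_T$, since the constraint term drops out. The necessary conditions therefore coincide with those already worked out for Scenario~A: the stationarity condition $\partial\mathscr{H}_B/\partial\psi_T = 0$ gives the optimal heading as a function of the costates as in~\eqref{eq:OptimalControlB}, and because neither state $x_T$ nor $y_T$ appears in this reduced Hamiltonian, the costate dynamics give $\dot{p}_{x_T} = \dot{p}_{y_T} = 0$, so the costates are constant on the interval.

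From constancy of the costates, the expression~\eqref{eq:OptimalControlB} for $\psi_T^*$ depends only on constants, hence $\psi_T(t)$ is constant on the interval. By~\eqref{eq:EOM} a constant heading yields $\dot{x}_T$ and $\dot{y}_T$ constant, i.e. a straight-line path traversed at constant speed $v_T$. This is precisely the claimed conclusion, so the core of the argument is short: it is essentially the observation that off the constraint boundary the problem is locally indistinguishable from the unconstrained Scenario~A.

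The main obstacle — and the place where care is needed — is the justification that $\mu = 0$ whenever the target is strictly outside the EZ, and the transition behavior at points where the trajectory touches the EZ boundary. The complementary slackness condition ($\mu = 0$ when $c < 0$, $\mu \le 0$ when $c = 0$) handles the interior of unconstrained arcs cleanly, but one should note that the costates are only guaranteed constant on each maximal open unconstrained subinterval; across a touch point or a constrained arc the costates may jump (via the tangency/jump conditions associated with state-variable inequality constraints, c.f.~\cite{bryson1975applied}), so the straight-line segments on either side need not be collinear. I would state the theorem's scope accordingly: the conclusion is that each unconstrained sub-arc is a straight line, not that the entire trajectory is a single straight line. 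A secondary technical point is ruling out the singular case $p_{x_T}^2 + p_{y_T}^2 = 0$; this is excluded because the costate vector cannot vanish identically without violating the transversality/nontriviality condition of the minimum principle for this free-final-time problem (where $\mathscr{H}_B \equiv 0$ along the optimal trajectory forces a nontrivial costate whenever $v_T > 0$).
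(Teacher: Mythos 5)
Your proposal is correct and follows essentially the same argument as the paper: with the constraint inactive, $\mu=0$, the Hamiltonian reduces to the unconstrained form, stationarity gives $\psi_T^*$ as a function of the costates, and the vanishing costate dynamics make the control constant, hence a straight line. Your additional caveats (constancy only on each maximal unconstrained sub-arc, possible costate jumps at junctions, and exclusion of the singular case $p_{x_T}^2+p_{y_T}^2=0$) go beyond what the paper states and are welcome refinements, but the core reasoning is the same.
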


\begin{proof}
	When the target is not on the boundary of the EZ then the Lagrange variable $\mu = 0$. Therefore the Hamiltonian from \cref{eq:HamiltonianB2} is
	\begin{equation}
		\mathscr{H}_B = p_{x_T} v_T \cos \psi_T + p_{y_T}v_T\sin\psi_T.
	\end{equation}
	Taking the partial of the Hamiltonian with respect to the control and setting equal to zero, the stationary condition is formed and is,
	\begin{equation}
		0 = \frac{\partial \mathscr{H}_B}{\partial \psi_T}.
		\label{eq:stationaryB}
	\end{equation}
	Evaluation of \cref{eq:stationaryB}, the following is obtained:
	\begin{equation}
	\begin{aligned}
		0 &= - p_{x_T}v_T\sin\psi_T + p_{y_T}v_T\cos\psi_T\\
		  &=- p_{x_T}\sin\psi_T + p_{y_T}\cos\psi_T
	\end{aligned}
	\label{eq:evalStationaryB}
	\end{equation}
	Algebraically manipulating \cref{eq:evalStationaryB}, the optimal control is found to be a function of the costates,
	\begin{equation}
		\psi_T^* = \cos^{-1} \begin{pmatrix} \frac{p_{x_T}}{\sqrt{p_{x_T}^2 + p_{y_T}^2}} \end{pmatrix} = \sin^{-1} \begin{pmatrix} \frac{p_{y_T}}{\sqrt{p_{x_T}^2 + p_{y_T}^2}} \end{pmatrix}.
		\label{eq:control1}
	\end{equation}
	Taking the partial of the Hamiltonian with respect to the states provides the dynamics of the costates. Since the states $x_T$ and $y_T$ do not appear explicitly in the Hamiltonian (under the assumption that the path constraint is not active), the costates are constant,
	\begin{equation}
		\dot{p}_{x_T} = -\dfrac{\partial \mathscr{H}_B}{\partial x_T} = 0 \quad \dot{p}_{y_T} = -\dfrac{\partial \mathscr{H}_B}{\partial y_T} = 0.
		\label{eq:costateDyn1}
	\end{equation}		
	The costate are constant by \cref{eq:costateDyn1} and the optimal control is solely a function of the costates as shown in \cref{eq:control1}. Therefore, the optimal control, $\psi_T^*$ is constant when the constraint is not active. As a result, the optimal trajectory is a straight line trajectory whenever $c < 0$.
\end{proof}

\begin{thm}
\label{thm:ScenarioB2}
When the target is constrained along the boundry, $\rho_\mathrm{max} = d$ and therefore the	heading which keeps the target on the boundary of the EZ is:
\begin{equation}
	\psi_T^* = 2 \tan^{-1} \left(  \frac{-(\sigma_1+ x_T) \pm \sqrt{y_T^2 - (\sigma_1 - x_T)(\sigma_1+x_T)}}{(\sigma_1 - x_T)} \right),
\end{equation}
where
\begin{equation*}
	\sigma_1 = \frac{2(x_T^2+y_T^2) - R_\mathrm{max}\sqrt{x_T^2+y_T^2}}{R_\mathrm{max}}.
\end{equation*}
\end{thm}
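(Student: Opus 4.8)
The plan is to use the fact that the inequality in \cref{eq:ineqConstr1} is a mixed state--control constraint --- the heading $\psi_T$ enters it directly through the aspect angle $\xi$ via \cref{eq:aspectAngle} --- so forcing the trajectory onto the boundary, i.e. $c=0$ and hence $\rho_\mathrm{max}=d$, does not require repeated differentiation of $c$ along the flow. Instead it pins $\psi_T$ pointwise as a function of the current position, and the theorem is obtained by solving $\rho_\mathrm{max}(\xi)=d$ for $\psi_T$. (Theorem~\ref{thm:ScenarioB1} already covers the complementary case $\mu=0$.)

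First I would substitute $\rho_\mathrm{max}$ from \cref{eq:EZ}, $\xi=\psi_T-\lambda-\pi$ from \cref{eq:aspectAngle,eq:LOS}, and $d$ from \cref{eq:distance} into $\rho_\mathrm{max}=d$. The phase shift by $\pi$ gives $\cos\xi=-\cos(\psi_T-\lambda)$, and since $\cos\lambda=x_T/d$ and $\sin\lambda=y_T/d$, the angle-difference identity collapses the boundary condition to a single equation that is linear in $(\cos\psi_T,\sin\psi_T)$,
\begin{equation*}
	x_T\cos\psi_T+y_T\sin\psi_T=-\sigma_1,\qquad \sigma_1=\frac{2(x_T^2+y_T^2)-R_\mathrm{max}\sqrt{x_T^2+y_T^2}}{R_\mathrm{max}},
\end{equation*}
which is exactly how the abbreviation $\sigma_1$ arises. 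Next I would introduce the Weierstrass (half-angle) substitution $\tau=\tan(\psi_T/2)$, so $\cos\psi_T=(1-\tau^2)/(1+\tau^2)$ and $\sin\psi_T=2\tau/(1+\tau^2)$; clearing the denominator turns the displayed equation into the quadratic $(\sigma_1-x_T)\tau^2+2y_T\tau+(\sigma_1+x_T)=0$. Applying the quadratic formula and then $\psi_T^*=2\tan^{-1}\tau$ produces the stated closed form, the $\pm$ encoding the two headings through the given point that are compatible with lying on the boundary.

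The algebra here is routine; the substantive points are the bookkeeping. I would verify that the discriminant $y_T^2-(\sigma_1-x_T)(\sigma_1+x_T)=d^2-\sigma_1^2$ simplifies to $4d^3(R_\mathrm{max}-d)/R_\mathrm{max}^2$, which is nonnegative precisely when $d\le R_\mathrm{max}$ --- automatic on the boundary, since there $d=\rho_\mathrm{max}(\xi)\le R_\mathrm{max}$ --- with equality (a single heading) only at the cardioid nose $\xi=0$. I would also flag the degenerate case $\sigma_1=x_T$, where the quadratic drops order and one root corresponds to $\psi_T=\pi$. The only genuinely delicate step is deciding which of the two roots is the physically admissible one --- the heading that keeps the target sliding along the boundary in the intended direction rather than the spurious intersection --- which is resolved by the entry/exit tangency conditions together with the sign requirement $\mu<0$ on the active arc of \cref{eq:HamiltonianB2}, and is where I would expect the bulk of the careful argument to lie.
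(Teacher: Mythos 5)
Your proposal follows the paper's proof essentially step for step: substitute \cref{eq:EZ,eq:aspectAngle,eq:LOS} into $\rho_\mathrm{max}=d$, use $\cos\lambda=x_T/d$, $\sin\lambda=y_T/d$ to reduce to $x_T\cos\psi_T+y_T\sin\psi_T+\sigma_1=0$, then apply the Weierstrass substitution and the quadratic formula. Your added observations (the discriminant equals $d^2-\sigma_1^2=4d^3(R_\mathrm{max}-d)/R_\mathrm{max}^2$, the degenerate case $\sigma_1=x_T$, and the need to select the admissible root via the sign of $\mu$) go beyond what the paper does and are correct. One point to fix: your quadratic $(\sigma_1-x_T)\tau^2+2y_T\tau+(\sigma_1+x_T)=0$ is right, but the quadratic formula applied to it gives
\begin{equation*}
	\tau=\frac{-y_T\pm\sqrt{y_T^2-(\sigma_1-x_T)(\sigma_1+x_T)}}{\sigma_1-x_T},
\end{equation*}
with $-y_T$ in the numerator, \emph{not} $-(\sigma_1+x_T)$ as in the theorem statement; that is, $-b$ has been replaced by $-c$. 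The theorem's displayed formula (inherited from the paper's own algebra) therefore does not follow from the quadratic you and the paper both derive, and your assertion that the quadratic formula ``produces the stated closed form'' papers over this discrepancy rather than catching it. Your derivation is the correct one; the closed form it actually yields is the one above.
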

\begin{proof}
	When the target is constrained on the boundary of the EZ, $\rho_\mathrm{max} = d$, and therefore:
	\begin{equation}
		\frac{R_\mathrm{max}}{2}(\cos \xi + 1) = \sqrt{x_T^2+y_T^2}.
		\label{eq:rhoEqD1}
	\end{equation}
	recall, $\xi = \psi_T - \lambda - \pi$ and $\lambda = \tan^{-1}(y_T/x_T)$. Making this substitution in \cref{eq:rhoEqD1},
	\begin{equation}
		\frac{R_\mathrm{max}}{2}(\cos \left(\psi_T - \lambda - \pi \right) + 1) = \sqrt{x_T^2+y_T^2}.
		\label{eq:rhoEqD2}
	\end{equation}
	Simplifying	\cref{eq:rhoEqD2} using the identities: $\cos(x-\pi) = -\cos(x)$ and $\cos(a-b) = \sin(a)\sin(b)-\cos(a)\cos(b)$.
	\begin{equation}
	\begin{aligned}
		\frac{R_\mathrm{max}}{2}(1 - \cos \left(\psi_T - \lambda\right)) &= \sqrt{x_T^2+y_T^2},\\
		\frac{R_\mathrm{max}}{2}(1 - \sin\psi_T\sin\lambda - \cos\psi_T \cos\lambda) &= \sqrt{x_T^2+y_T^2},
	\end{aligned}
		\label{eq:rhoEqD3}
	\end{equation}
	As an aside:
	\begin{equation}
	\begin{aligned}
		\cos\lambda &= \cos\left(\tan^{-1}\left(\frac{y_T}{x_T}\right)\right) = \frac{1}{\sqrt{\frac{y_T^2}{x_T^2}+1}} = \frac{x_T}{\sqrt{x_T^2+y_T^2}},\\
		 \sin\lambda &= \sin\left(\tan^{-1}\left(\frac{y_T}{x_T}\right)\right) = \frac{y_T}{x_T\sqrt{\frac{y_T^2}{x_T^2}+1}} = \frac{y_T}{\sqrt{x_T^2+y_T^2}}.
	\end{aligned}
		\label{eq:atanLaws}
	\end{equation}
	Substituting \cref{eq:atanLaws} into \cref{eq:rhoEqD3}, the following is obtained:
	\begin{equation}
		\frac{R_\mathrm{max}}{2}\left( 1 - \frac{x_T \cos \psi_T}{\sqrt{x_T^2+y_T^2}} -\frac{y_T \sin \psi_T}{\sqrt{x_T^2+y_T^2}}\right) = \sqrt{x_T^2+y_T^2}.
		\label{eq:rhoEqD4}
	\end{equation}
    Simplifying \cref{eq:rhoEqD4} by re-arranging,
	\begin{equation}
		x_T \cos \psi_T + y_T \sin \psi_T - \frac{R_\mathrm{max}\sqrt{x_T^2+y_T^2} -2 (x_T^2+y_T^2)}{R_\mathrm{max}}= 0.
	\end{equation}
	Defining $\sigma_1$ as,
	\begin{equation}
		\sigma_1 = \frac{2(x_T^2+y_T^2) - R_\mathrm{max}\sqrt{x_T^2+y_T^2}}{R_\mathrm{max}},
		\label{eq:sigma1}
	\end{equation}
	the following is obtained:
	\begin{equation}
		x_T \cos \psi_T + y_T \sin \psi_T + \sigma_1 = 0.
		\label{eq:simpleRhoD}
	\end{equation}
	Letting $\tau = \tan(\psi_T/2)$. Then $\sin(\psi) = \tfrac{2 \tau}{ (\tau^2+1)}$ and $\cos \psi_T = \tfrac{1-\tau^2}{\tau^2+1}$. Substitution of these half-angle identities in \cref{eq:simpleRhoD}, the following is obtained:
	\begin{equation}
		\sigma_1 + \frac{x_T}{\tau^2+1} + \frac{2y_T\tau}{\tau^2+1} - \frac{x_T \tau^2}{\tau^2+1}= 0
		\label{eq:halfAng1}
	\end{equation}	
	Using the common denominator $\tau^2+1$, \cref{eq:halfAng1} is simplified to be:
	\begin{equation}
		x_T + \sigma_1 + 2 y_2 \tau + (\sigma_1 - x_T) \tau^2 = 0
		\label{eq:quadraticStep}
	\end{equation}
	Using the quadratic equation, $\tau$ is obtained:
	\begin{equation}
		\tau = \tan (\psi_T/2) = \frac{-(\sigma_1+ x_T) \pm \sqrt{(2y_T)^2 - 4(\sigma_1 - x_T)(\sigma_1+x_T)}}{2(\sigma_1 - x_T)}
		\label{eq:quadraticStep2}
	\end{equation}
	Finally, simplifying \cref{eq:quadraticStep2}, the optimal control, $\psi_T^*$, is obtained and is
	\begin{equation}
		\psi_T^* = 2 \tan^{-1} \begin{pmatrix}  \dfrac{-(\sigma_1+ x_T) \pm \sqrt{y_T^2 - (\sigma_1 - x_T)(\sigma_1+x_T)}}{(\sigma_1 - x_T)} \end{pmatrix}.
		\label{eq:OptimControlB}
	\end{equation}
\end{proof}

Below are some important features of the solution which may be deduced directly from Lemmas~\ref{thm:ScenarioB1} and~\ref{thm:ScenarioB2}.
	The constrained control keeps the target on the border of the engagement zone, until a tangent exists for the target to depart the engagement zone and reach the desired final location.
%
	Provided the initial location and final desired location of the target reside strictly outside the EZ at their respective times, the optimal target trajectory for Scenario B, assuming the constraint $c$ will become active, is comprised of a sequence of trajectories: unconstrained, constrained, then unconstrained ($UCU$). The optimal unconstrained arcs are straight lines and the optimal control for constrained arc obeys \cref{eq:OptimControlB}.
%
	The time whereby the target arrives at the desired location for Scenario B will be greater than or equal to the arrival time from Scenario A; namely, $t_{f,B} \geq t_{f,A}$.
%
	The optimal control from Scenario A and B are identical in the event that a straight line trajectory from T to F does not intersect the EZ. Because a straight line does not intersect the EZ, the Lagrange variable, $\mu$ is zero for the entire trajectory; by \Cref{thm:ScenarioB1} the trajectory is a straight line and therefore the optimal control follows \cref{eq:OptimalControlA}.

\subsection{Scenario C. Min-Time Arrival while Distancing Itself from the EZO}

In this scenario the objective is to identify the optimal control whereby the target, $T$, arrives at a desired terminal location, $F$, while distancing itself from the origin of the engagement zone. The objective cost is defined in \cref{eq:ObjectiveC}. The objective is a Bolza problem and has penalties in final time and incurs a running cost in the event that the target enters the engagement zone. The Hamiltonian for this optimal control problem is of the form:
\begin{equation}
	\mathscr{H}_C = \left< \mathbf{p}(\mathbf{x}(t),u(t),t),\mathbf{f}(\mathbf{x}(t),u(t),t) \right> + l(\mathbf{x}(t),u(t),t).
	\label{eq:HamiltonianC1}
\end{equation}
The subscript $C$ denotes that the Hamiltonian corresponds to Scenario C. The Hamiltonian in \cref{eq:HamiltonianC1} is more explicitly,
\begin{equation}
	\mathscr{H}_C = \begin{cases}
 	p_{x_T} v_T \cos \psi_T + p_{y_T}v_T\sin\psi_T + \dfrac{\rho}{d} - 1 & d \leq \rho \\
 	p_{x_T} v_T \cos \psi_T + p_{y_T}v_T\sin\psi_T & d > \rho
 \end{cases}
\end{equation}
Recall, the range of the engagement zone,  $\rho$, from \cref{eq:EZ}, the relative bearing angle, $\xi$, from \cref{eq:aspectAngle}, and the line of sight angle from \cref{eq:LOS}, repeated for the reader's convenience:
\begin{equation*}
\begin{aligned}
	\rho_{\mathrm{max}}(\xi) = \frac{R_{\mathrm{max}}}{2} \left(\cos \xi + 1\right), \quad 
	\xi = \psi_T - \lambda - \pi, \quad
	\lambda &= \tan^{-1}(y_T/x_T)
\end{aligned}
\end{equation*}
Since the Hamiltonian, where $d>\rho$, is the same as $\mathscr{H}_A$; the case where the target is outside the EZ results in straight line optimal trajectories. For the case where the target is inside the EZ, the Hamiltonian,
\begin{equation}
	\mathscr{H}_C = p_{x_T} v_T \cos \psi_T + p_{y_T}v_T\sin\psi_T + \dfrac{\rho_\mathrm{max}}{d} - 1, \quad d \leq \rho.
	\label{eq:HamiltonianC2}
\end{equation}
Substitution of \cref{eq:EZ,eq:aspectAngle,eq:LOS} into \cref{eq:HamiltonianC2} the following is obtained,
\begin{equation}
	\mathscr{H}_C = p_{x_T} v_T \cos \psi_T + p_{y_T}v_T\sin\psi_T + \dfrac{R_\mathrm{max}}{2(x_T^2+y_T^2)}\left( \sqrt{x_T^2+y_T^2} - x_T \cos \psi_T - y_T \sin \psi_T \right) - 1
	\label{eq:HamiltonianC3}
\end{equation}

The necessary conditions for optimality are obtained by taking partials of the Hamiltonian with respect to the control and the states. First, consider the dynamics of the costates, obtained by taking the partial of the Hamiltonian $\mathscr{H}_C$ in \cref{eq:HamiltonianC3} with respect to the states,
\begin{equation}
\begin{aligned}
	\dot{p}_{x_T} &= -\frac{\partial \mathscr{H}_C}{\partial x_T} = - \frac{\partial }{\partial x_T} \left( \dfrac{R_\mathrm{max}}{2(x_T^2+y_T^2)}\left( \sqrt{x_T^2+y_T^2} - x_T \cos \psi_T - y_T \sin \psi_T \right) \right),\\ 
	\dot{p}_{y_T} &= -\frac{\partial \mathscr{H}_C}{\partial y_T} = - \frac{\partial }{\partial y_T} \left( \dfrac{R_\mathrm{max}}{2(x_T^2+y_T^2)}\left( \sqrt{x_T^2+y_T^2} - x_T \cos \psi_T - y_T \sin \psi_T \right) \right). 
\end{aligned}
\label{eq:costatesC1}
\end{equation}
The partials are computed and are as follows:
\begin{equation}
	\begin{aligned}
	\dot{p}_{x_T} &=  \frac{ R_\mathrm{max} x_T}{2 \left(x_T^2+y_T^2\right)^2}\left( \sqrt{x_T^2+y_T^2} - x_T \cos \psi_T - y_T \sin \psi_T\right) - \frac{R_\mathrm{max}}{2(x_T^2+y_T^2)}\begin{pmatrix} \dfrac{x_T}{\sqrt{x_T^2+y_T^2}} - \cos \psi_T\end{pmatrix}\\
	\dot{p}_{y_T} &=  \frac{ R_\mathrm{max} y_T}{2 \left(x_T^2+y_T^2\right)^2}\left( \sqrt{x_T^2+y_T^2} - x_T \cos \psi_T - y_T \sin \psi_T\right) - \frac{R_\mathrm{max}}{2(x_T^2+y_T^2)}\begin{pmatrix} \dfrac{y_T}{\sqrt{x_T^2+y_T^2}} - \sin \psi_T\end{pmatrix}\\
	\end{aligned}
	\label{eq:partialsC}
\end{equation}
Using the partials in \cref{eq:partialsC} the costate dynamics are known. Next, the stationarity condition provides the optimal control as a function of the states and costates:
\begin{equation}
	\frac{\partial \mathscr{H}_C}{\partial \psi_T} = - p_{x_T}v_T \sin \psi_T + p_{y_T}v_T\cos \psi_T + \frac{R_\mathrm{max}}{2(x_T^2+y_T^2)}\left( x_T \sin \psi_T - y_T \cos \psi_T \right) = 0
	\label{eq:StationaryC}
\end{equation}
Solving \cref{eq:StationaryC} for the optimal control, $\psi_T^*$, the following is obtained:
\begin{equation}
	\psi_T^* = \cos^{-1}\left( \dfrac{\dfrac{R_\mathrm{max}x_T}{2(x_T^2+y_T^2)}- p_{x_T}v_T}{v_T^2(p_{x_T}^2 + p_{y_T}^2) + \dfrac{R_\mathrm{max}}{4(x_T^2+y_T^2)}-\dfrac{v_TR_\mathrm{max}}{(x_T^2+y_T^2)}\left( x_T p_{x_T} + y_T p_{y_T}\right)} \right)
	\label{eq:OptimControlC1}
\end{equation}
Simplifying \cref{eq:OptimControlC1} using algebraic manipulation, the following is obtained:
\begin{equation}
	\psi_T^* = \cos^{-1}\left( \dfrac{2 R_\mathrm{max}x_T - 4 p_{x_T}(x_T^2+y_T^2)}{4v_T^2(x_T^2+y_T^2)(p_{x_T}^2+p_{y_T}^2) + R_\mathrm{max}^2 - 4v_T R_\mathrm{max}(x_Tp_{x_T}+y_Tp_{y_T}) }\right)
	\label{eq:OptimControlC2}
\end{equation}
Note, that the optimal control $\psi_T^*$ for the entire trajectory for Scenario C is constant when the target is outside the EZ and satisfies \cref{eq:OptimControlC2} when the target is inside the EZ. The switching time wherein the target enters and exits the engagement zone is unknown and can be obtained numerically. The necessary conditions for optimality for Scenario C are defined in \cref{eq:EOM,eq:partialsC,eq:OptimControlC2}. These equations paired with the initial state, $T = \mathbf{x}(t_0) = \mathbf{x}_0$ and final state, $F = \mathbf{x}(t_f) = \mathbf{x_f}$ define a two-point boundary-value problem (TPBVP). Since the costates are unknown at the initial and final time, this TPVBP is solved in this paper numerically, rather than analytically, using a nonlinear program (NLP) solver. It should be noted, as outlined in \Cref{tab:OCPs}, that the final time is free.

\begin{rmk}
	The final time for Scenario C is bounded by the final time from Scenario A: the straight line trajectory, and Scenario B: the trajectory which does not allow the target to enter the engagement zone:
	\begin{equation*}
		t_{f,C} \in [t_{f,A},t_{f,B}).
	\end{equation*}	
\end{rmk} 

\subsection{Specified Time of Arrival while Distancing Itself from the EZO}

In this scenario, the objective is to identify the optimal control wherein the target, $T$, arrives at a desired terminal location, $F$, at a required time, $t_\mathrm{go} \in [t_{f,A},t_{f,B})$, while distancing itself from the EZO. The penalty function for the target vehicle is defined in \cref{eq:penaltyFunction}. The objective cost function which the target strives to minimize is defined in \cref{eq:ObjectiveD}. The objective is a Bolza problem and has penalties in final time and incurs a running cost in the event that the target enters the engagement zone. 

\begin{equation}
	\mathscr{H}_D = \left< \mathbf{p}(\mathbf{x}(t),u(t),t),\mathbf{f}(\mathbf{x}(t),u(t),t) \right> + l(\mathbf{x}(t),u(t),t).
	\label{eq:HamiltonianD1}
\end{equation}
This scenario is identical to Scenario C for the exception that the final time is specified rather than free; namely,
\begin{equation}
	\begin{aligned}
		\mathbf{x}(t_\mathrm{go}) = F.
	\end{aligned}
\end{equation}
From the preceding analysis, one may infer that the target's trajectories which lie outside the engagement zone are straight lines, and those which pass through the engagement zone obey the necessary conditions for optimality described in \cref{eq:OptimControlC2,eq:partialsC,eq:EOM}. The TPBVP which develops is one with fixed initial state, fixed final state, fixed initial time, and fixed final time. Two unknowns remain: the time where the target enters the EZ and the time where the target leaves the EZ. When the specified time $t_\mathrm{go} = t_{f_,A}$, the optimal target trajectory is a straight-line trajectory and the optimal control problem from Scenario D is identical to Scenario A. 

\section{Results}
\label{sec:results}

The numerical solution approach taken here is based on direct transcription~\cite{betts2010practical, kelly2017introduction} in lieu of indirect optimization methods such as multiple shooting.
More specifically, we utilize PSM~\cite{fahroo2005pseudospectral} which relies on quadrature techniques to perform the integrations needed to evaluate the cost functionals (specifically for Scenarios C and D).
Here, the Legendre-Gauss-Radau (LGR) nodes are utilized, and we set $M=19$, where $M$ is the number of collocation nodes.
Since the LGR scheme places a node at the start of the trajectory, but not at the end, we add an additional node at the trajectory's endpoint in order to enforce terminal state constraints.
The choice of PSM over multiple shooting is in order to have a unified implementation approach amongst the four scenarios A--D; multiple shooting would have entailed more tailored numerical approaches to handle treating the EZ as a hard constraint versus soft constraint.

Figure~\ref{fig:psmFlow} describes the numerical solution procedure.
We utilize the generic constrained nonlinear solver algorithm Constrained Optimization by Linear Approximation (COBYLA)~\cite{powell1994direct} which is a local, derivative-free algorithm implemented within the NLopt software package~\cite{johnson2019nlopt} (utilized from within the Julia programming language).

\begin{figure}[H]
\centering
	\includegraphics[width=3.5in]{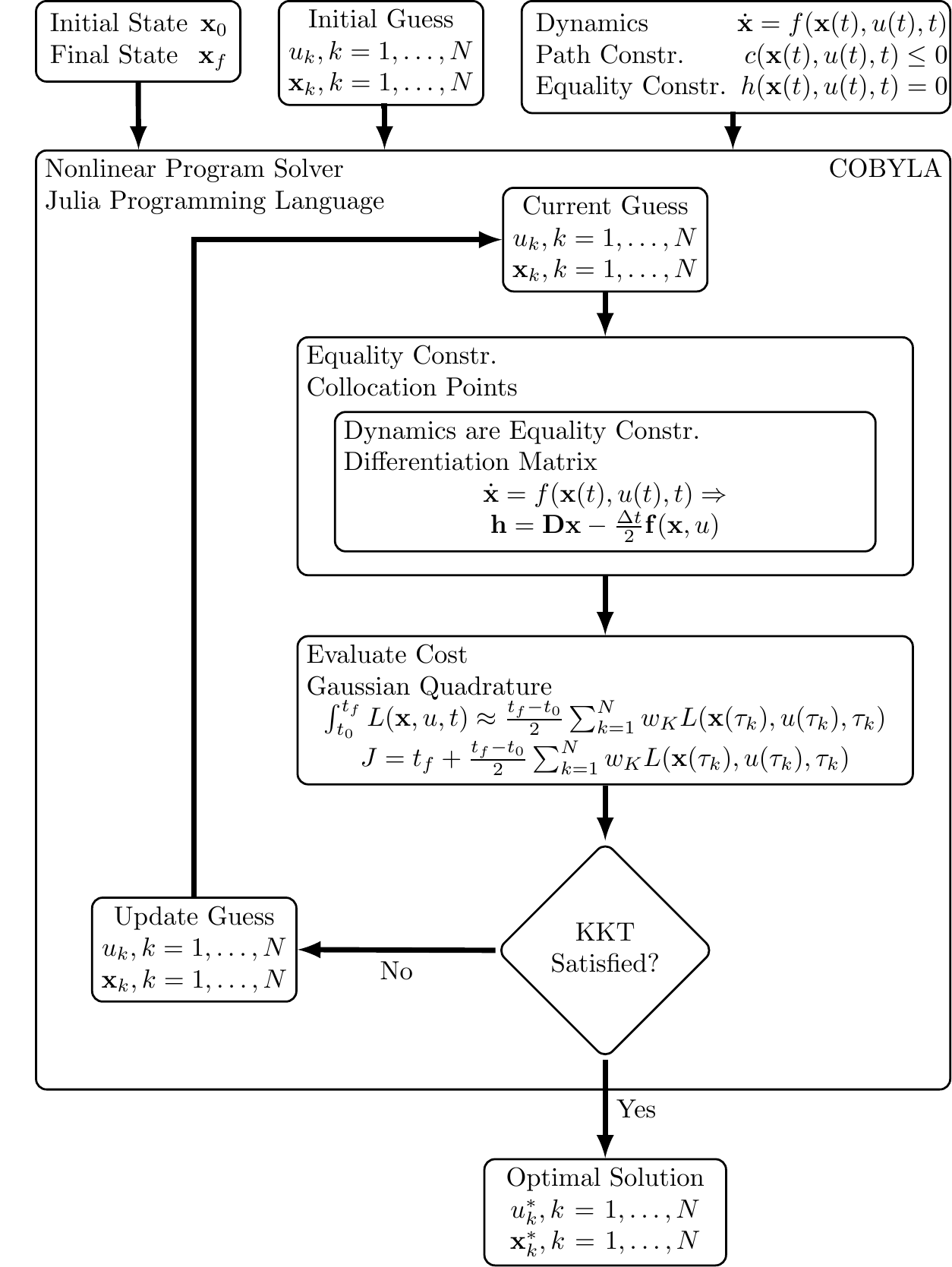}
	\caption{A flow diagram for implementing pseudospectral methods for solving optimal control problems directly.}
	\label{fig:psmFlow}
\end{figure}

Figure~\ref{fig:results} contains sample solution trajectories for the case where $\mathbf{x}_0 = (1,3)^\top$ and $\mathbf{x}_f = (-0.5, -3)^\top$, $v_T = 1$, $R_\mathrm{max} = 2$, $R_\mathrm{min} = 0$, and various $k_{EZ}$.
The effect of varying $k_{EZ}$ is clearly demonstrated in Fig.~\ref{fig:C_results} whereupon smaller $k_{EZ}$ result in trajectories that approach the minimum-time path (c.f.~trajectory A in Fig.~\ref{fig:all_results}) and larger $k_{EZ}$ result in trajectories that approach the fully constrained path (c.f.~trajectory B in Fig.~~\ref{fig:all_results})
For Scenario D, the selected desired final time is chosen such that $t_{f,D} = 6.25 \in \left[ t_{f,A}, t_{f,B} \right] $.
The cardioid-shaped EZ is drawn for Scenario B when the constraint is active (i.e., $d = \rho_\mathrm{max}$) and also for Scenario C when $d < \rho_\mathrm{max}$.

\begin{figure}[H]
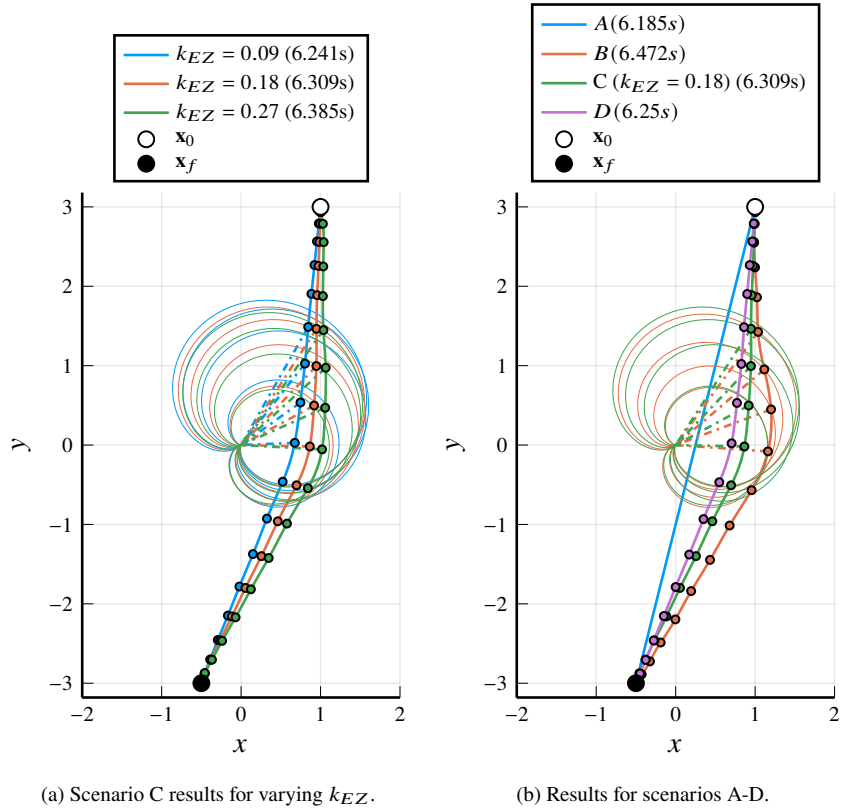

	\centering
	\pgfplotsset{unit vector ratio*=1 1 1}
	\subfloat[Scenario C results for varying $k_{EZ}$.]{\includegraphics[width=.35\textwidth]{scenarioC_results.tikz}\label{fig:C_results}}
	\subfloat[Results for scenarios A-D.]{\includegraphics[width=.35\textwidth]{all_results.tikz}\label{fig:all_results}}
	\caption{
		Various solutions for $\mathbf{x}_0 = (1,3)^\top$ and $\mathbf{x}_f=(-0.5, -3)^\top$.
		The instantaneous EZ is shown when the constraint is active for Scenario B and when $g_{EZ} \neq 0$ for Scenario C.
	}%
	\label{fig:results}
\end{figure}

\section{Conclusions}
\label{sec:conclusions}

In this work we presented a constrained trajectory optimization problem with potential applications to Urban Air Mobility, Unmanned Traffic Management, as well as adversarial scenarios.
The dynamic keepout zone, or EZ, was assumed to be a cardiod shape which is always oriented towards the target vehicle thereby making our analysis a worst-case analysis.
We demonstrated the effect of various interpretations of the EZ: namely, as a hard constraint (the vehicle \textit{must not} enter it), as a soft constraint, and even as the cost functional of interest.
The first order necessary conditions for optimality were derived, and useful features of the overall solution were extracted.
For example, when the EZ is a hard constraint, it was shown that the optimal trajectory, in cases where the constraint becomes active, is comprised of a sequence of unconstrained, constrained, and unconstrained arcs; the target vehicle's heading is constant along the unconstrained arcs, and the optimal constrained control was derived in closed form.
A generic solution procedure based on pseudo-spectral methods was utilized to solve for trajectories in each of the cases.
Future work will focus on further exploiting the first order necessary conditions for optimality to develop more tailored numerical solution procedures.
Additionally, a state-feedback controller which approximates the optimal trajectory, preferably with a bound on worst-case performance w.r.t.~the optimal, is desired.

\section{Acknowledgments}

The views expressed in this document are those of the author and do not reflect the official policy or position of the United States Air Force, the United States Department of Defense or the United States Government. This work has been supported in part by AFOSR LRIR No. 21RQCOR084. DISTRIBUTION STATEMENT A. Approved for public release: distribution unlimited. (AFRL-2021-1507).

\bibliography{library.bib}

\end{document}